\numberwithin{equation}{section}
\newtheorem{theorem}{Theorem}[section]
\newtheorem{definition}{Definition}
\newtheorem{lemma}{Lemma}
\newtheorem{corollary}{Corollary}[section]
\newcommand{\Sec}{Sect\,}
\newcommand{\di}{{\rm{ div_\Sigma}}}
\newcommand{\g}{{\rm{\nabla \hspace{0.4mm}}}}
\newcommand{\Ric}{{\rm{Ric}}}
\newcommand{\Hess}{{\rm{Hess\,}}}
\newcommand{\area}{\rm{|\Sigma|}}
\title[On the first Stability Eigenvalue of Surfaces ]
{ On the first Stability Eigenvalue of Surfaces with 
Constant weighted mean curvature
}
\author{M. Batista, J. I. Santos}
\address{IM, Universidade Fe\-deral de Alagoas, Macei\'o, 
AL, CEP 57072-970, Brazil}
\email{mhbs@mat.ufal.br \mbox{and} jissivan@gmail.com}
\subjclass[2010]{53C23; 58J50}
\keywords{Eigenvalue estimates, Weighted area, Surfaces} 
\thanks{The first author was supported by CNPq/Brazil. }
\begin{document} 
\maketitle
\begin{abstract} 
Let $\Sigma$ be a compact immersed surface with constant weighted mean curvature 
$H_f$ in a weighted manifold $(M^3,g,f)$. In this paper we obtain upper bounds for the
first eigenvalue of the weighted Jacobi operator  on $\Sigma$ in terms of $H_f$ and the 
curvature of the ambient. As consequence we obtain that there is no stable self-shrinker 
of the mean curvature flow.

\end{abstract}

\section{Introduction}
Many branches of Differential Geometry, such as, Ricci flow, mean curvature flow
and optimal transportation theory, are related to 
Riemannian manifolds endowed of a smooth positive density function
(see for instance \cite{e, cmz, mw1, mw2, ww, m, gp, v} and references therein). 
A theory of these spaces and its  
curvatures goes back to Lichnerowicz \cite{l1, l2} and more recently  by Bakry and \'Emery 
\cite{BE}, and it  has been very active area in recent years.

We recall that a {weighted manifold} is a 
Riemannian manifold $({M^3},g)$ 
endowed with a real-valued smooth function $f: M \to \mathbb{R}$ 
which is used as a density to measure geometric objects on $M$ .
Associated to this structure
we have an important second order differential operator defined by
$$\Delta_f u= \Delta u - \langle\nabla u, \nabla f\rangle,$$
where $u \in C^\infty$. This operator is know as the Drift Laplacian.

Also, following  \cite{wylie} and \cite{BE}, the natural generalizations of
the sectional and Ricci curvatures  are defined as 
\begin{equation}\label{102}\overline{\Sec}_f^{2m}(X,Y)= \overline{\Sec}(X,Y) + \frac{1}{2}
\left(\Hess f(X,X)-\frac{(df(X))^2}{2m}\right),\end{equation}
and
\begin{equation}\label{104}\Ric_f^{2m}=\Ric_f -\frac{df\otimes df}{2m},\end{equation}
where $X$ and $Y$ are unit and orthogonal vectors fields tangents to $M$, $m>0$,
and $\Ric_f=\Ric + \Hess f.$

\medskip



Now, we  introduce some objects 
related with the theory of isometric immersions in a weighted manifold.
Let  $\psi:\Sigma^2\rightarrow  M^3$ be an
isometric immersion of an oriented surface $\Sigma$ in $M^3$
and consider $N$ an unit normal vector field
globally defined. We will denote by $A$ its {second fundamental form} 
and by $H$ the {mean curvature} of $\Sigma$, that is, the trace of $A$. 


We recall that the {weighted mean
curvature}, introduced by Gromov in \cite{g}, of the immersion $\psi$ is given by 
$$H_{f}=H+\langle N,\nabla f\rangle.$$

Along this paper, $dv_f = e^{-f}dv$ will denote  
the weighted area element  of the surface $\Sigma$, 
where $dv$ is the area element of $\Sigma$,
and $\area$ denote the weighted area of $\Sigma$. Furthermore, we will denote by $K$ the
Gaussian curvature of $\Sigma$ and $\overline\Sec_\Sigma$ is the sectional
curvature of $M$ restricted to $\Sigma.$  

It is a remarkable fact that, in the variational context,
immersed surfaces with constant  weighted mean curvature are critical points of the 
weighted area functional under variations that preserves the
weighted volume (see \cite{b}). 
Moreover, the second variation of the weighted area gives rise the
weighted Jacobi operator on  $\Sigma$, see \cite{e}, which is defined by 
\begin{equation}\label{equation 161}
J_{ f}u=\Delta_{ f} u+(| A|^2+\Ric_{ f}(N,N))u, 
\end{equation}
for any $u\in C^{\infty}(\Sigma)$ and $|A|^2$ is the Hilbert-Schmidt norm of $A$.

\medskip

%
%

In this paper, encouraged by the ideas in \cite{ P, ABP, ABB},  we  investigate some
geometric aspects of immersed surfaces with constant weighted mean curvature.
In particular  we study problems related with the first  eigenvalue of the
weighted Jacobi operator on compact surfaces without boundary.

\medskip

Our first result reads as follows:

\begin{theorem}\label{186}
Let $(M^3,g, f)$ be a weighted manifold with $\Ric_{ f}^{2m}\geq2c$ 
and  $\overline{\Sec}\geq c$, for some $c\in\mathbb R$, and consider $\psi:\Sigma^2\rightarrow M^3$
an isometric immersion of a compact surface with constant weighted mean curvature
$H_{ f}$. Let $\lambda_1$ be the first eigenvalue of weighted Jacobi operator. Then,  
\begin{enumerate}
\item [\bf{(i)}] $\lambda_1\leq-\dfrac{1}{2}\left(\frac{H_{ f}^2}{1+m}+4c\right)$, 
with equality if and only if $\Sigma$ is totally
umbilical in $M^3$, $\Ric_{ f}^{2m}=2c$ and $d f(N)=\dfrac{m}{1+m}H_{ f}$ on $\Sigma$;
\item [\bf{(ii)}] $\lambda_1\leq-\dfrac{H_{ 
f}^2}{(1+2m)}-4c-\dfrac{8\pi(g-1)}{\area}$, where $g$ denotes the genus of $\Sigma$. 
Furthermore,
 the equality occurs if and only if  $K$ is constant, $\overline{\Sec}_{\Sigma}=c$,
$\Ric_{ f}^{2m}=2c$ and $d f(N)=\dfrac{2m}{1+2m}H_ f$ on $\Sigma$.
\end{enumerate}
\end{theorem}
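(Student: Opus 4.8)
The plan is to estimate $\lambda_1$ from above through the variational characterization of the first eigenvalue of $J_f$. Since $\Delta_f$ is self-adjoint with respect to $dv_f$, the problem $J_f u + \lambda u = 0$ has
$$\lambda_1 = \inf_{u\not\equiv 0}\frac{\int_\Sigma\left(|\nabla u|^2-(|A|^2+\Ric_f(N,N))u^2\right)dv_f}{\int_\Sigma u^2\,dv_f},$$
so every test function gives an upper bound. As $\Sigma$ is compact and $H_f$ is constant, I would take the constant test function $u\equiv 1$, which yields
$$\lambda_1\leq-\frac{1}{|\Sigma|}\int_\Sigma\left(|A|^2+\Ric_f(N,N)\right)dv_f.$$
Both statements then reduce to bounding $\int_\Sigma(|A|^2+\Ric_f(N,N))\,dv_f$ from below, and the two parts differ only in how $|A|^2$ is handled.

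For part (i) I would combine three pointwise facts. Writing the principal curvatures as $\kappa_1,\kappa_2$ gives $|A|^2=\tfrac12 H^2+\tfrac12(\kappa_1-\kappa_2)^2\geq\tfrac12 H^2$, with equality exactly at umbilic points. From $\Ric_f^{2m}=\Ric_f-\frac{df\otimes df}{2m}$ and $\Ric_f^{2m}\geq 2c$ one gets $\Ric_f(N,N)\geq 2c+\frac{(df(N))^2}{2m}$. Finally $H=H_f-df(N)$. Setting $\phi=df(N)$, the integrand is bounded below by $\tfrac12(H_f-\phi)^2+\frac{\phi^2}{2m}+2c$, and minimizing $\phi\mapsto\tfrac12(H_f-\phi)^2+\frac{\phi^2}{2m}$ gives the minimum $\frac{H_f^2}{2(1+m)}$, attained at $\phi=\frac{m}{1+m}H_f$. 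This produces exactly $\lambda_1\leq-\tfrac12\big(\frac{H_f^2}{1+m}+4c\big)$. Equality forces $u\equiv 1$ to be a first eigenfunction (so $|A|^2+\Ric_f(N,N)$ is constant) together with equality in all three steps, i.e. $\Sigma$ totally umbilical, $\Ric_f^{2m}=2c$ normally, and $df(N)=\frac{m}{1+m}H_f$.

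For part (ii) the idea is to spend the umbilicity slack to recover topology. I would use the Gauss equation $\det A=K-\overline{\Sec}_\Sigma$ to write $|A|^2=H^2-2\det A=H^2-2K+2\overline{\Sec}_\Sigma$, then apply $\overline{\Sec}\geq c$ to replace $\overline{\Sec}_\Sigma$ by $c$. Re-running the optimization over $\phi=df(N)$, now with coefficient $1$ rather than $\tfrac12$ in front of $(H_f-\phi)^2$, the minimum of $\phi\mapsto(H_f-\phi)^2+\frac{\phi^2}{2m}$ is $\frac{H_f^2}{1+2m}$, attained at $\phi=\frac{2m}{1+2m}H_f$; this explains the $1+2m$ denominator and the new equality value of $df(N)$. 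The genus enters through Gauss--Bonnet, $\int_\Sigma K\,dv=2\pi\chi(\Sigma)=4\pi(1-g)$, which turns the $-2K$ term into $+8\pi(g-1)$. Equality additionally requires $\overline{\Sec}_\Sigma=c$, $\Ric_f^{2m}=2c$ normally, $df(N)=\frac{2m}{1+2m}H_f$, and the residual identity forcing $K$ constant.

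The step I expect to be delicate is precisely this Gauss--Bonnet reduction in (ii): the constant test function produces the weighted integral $\int_\Sigma K\,dv_f=\int_\Sigma K\,e^{-f}\,dv$, whereas the Euler characteristic is captured by the unweighted integral $\int_\Sigma K\,dv$. Reconciling the two — either by establishing a weighted Gauss--Bonnet identity adapted to the density $e^{-f}$, or by arranging the bookkeeping so that the topological term genuinely appears over the weighted area $|\Sigma|$ as stated — is the main obstacle, and the demand that $K$ be constant in the equality case of (ii) suggests this is exactly where the rigidity is forced. The remaining work, namely the two one-variable optimizations and the tracking of equality conditions, is routine once this point is settled.
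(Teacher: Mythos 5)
Your part (i) is sound and is, in substance, exactly the paper's proof: the paper also tests with the constant function, splits $|A|^2=|\phi|^2+\frac{H^2}{2}$ via the umbilicity tensor, and invokes the elementary inequality $(a+b)^2\geq\frac{a^2}{1+m}-\frac{b^2}{m}$ (equality iff $b=-\frac{m}{1+m}a$), which is precisely your one-variable minimization over $\phi=df(N)$ combined with $\Ric_f(N,N)\geq 2c+\frac{(df(N))^2}{2m}$. For part (ii) the paper's bookkeeping differs only cosmetically from yours: instead of plugging $u\equiv 1$ into the Rayleigh quotient, it integrates $\Delta_f\ln\rho$ for the positive first eigenfunction $\rho$ and obtains the exact identity $\lambda_1=-\frac{1}{\area}\bigl(\alpha+\int_\Sigma(|A|^2+\Ric_f(N,N))\,dv_f\bigr)$ with $\alpha=\int_\Sigma\rho^{-2}|\nabla\rho|^2\,dv_f\geq 0$; equality then forces $\alpha=0$, i.e.\ $\rho$ constant, which is your ``$u\equiv1$ becomes a first eigenfunction'' clause. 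The Gauss equation step, the replacement $\overline{\Sec}_\Sigma\geq c$, and the second optimization giving $\frac{H_f^2}{1+2m}$ at $df(N)=\frac{2m}{1+2m}H_f$ all match the paper.

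The step you flagged as the main obstacle is a genuine gap in your proposal --- you leave it unresolved --- but you should know that the paper does not close it either: having stipulated that every integration is against $dv_f$, it simply replaces $-2\int_\Sigma K\,dv_f$ by $-8\pi(1-g)$, i.e.\ it applies the classical unweighted Gauss--Bonnet theorem to the weighted integral of $K$. No identity $\int_\Sigma K\,dv_f=2\pi\chi(\Sigma)$ holds in general: for a round sphere of radius $r$ in the Gaussian space one has $f\equiv r^2/4$ on $\Sigma$ and $\int_\Sigma K\,dv_f=4\pi e^{-r^2/4}\neq 4\pi$. Comparing the paper's claimed bound with the correct one coming from the exact identity, the substitution produces a valid (weaker) upper bound precisely when $\int_\Sigma K\,dv_f\leq\int_\Sigma K\,dv=4\pi(1-g)$ --- true, for instance, when $f$ is constant along $\Sigma$, or when $K\geq0$ and $f\geq0$ --- but not in general, and the same weighted/unweighted conflation recurs in the converse direction of (ii), where constant $K$ gives $2K=8\pi(1-g)/\mathrm{Area}(\Sigma)$ with the \emph{unweighted} area while the paper divides by the weighted area $\area$. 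So your instinct was correct and you did not miss an idea the paper supplies: a rigorous version of (ii) must either keep the curvature term as $\frac{2}{\area}\int_\Sigma K\,dv_f$, state the topological term over the unweighted area, or add a hypothesis guaranteeing $\int_\Sigma K\,dv_f\leq 4\pi(1-g)$; the equality characterization would then need to be revisited accordingly.
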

\medskip

As a consequence of this theorem we obtain

\begin{corollary} Under the  conditions of Theorem \ref{186} we set 
$$
\Lambda_1=-\left(\dfrac{H_{f}^2}{1+2m}+4c\right)
\, \textrm{ and }  \,
\Lambda_2=-\dfrac{1}{2}\left(\frac{H_{ f}^2}{1+m}+4c\right).$$
Assume $H_f^2 \geq -4(1+m)(1+2m)c$. If $\Lambda_1 < \lambda_1\leq \Lambda_2$, then $g=0$,
that is, $\Sigma$ is topologically a sphere.
\end{corollary}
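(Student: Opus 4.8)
The plan is to deduce the statement directly from Theorem~\ref{186}, essentially reading off the genus from part~(ii) once the sign of the correction term is controlled by the hypothesis. First I would rewrite part~(ii) in the notation of the corollary: since $\Lambda_1=-\left(\frac{H_f^2}{1+2m}+4c\right)$, the second estimate becomes
\begin{equation}\label{cor-rewrite}
\lambda_1\leq\Lambda_1-\frac{8\pi(g-1)}{\area}.
\end{equation}
I would also observe that the upper constraint $\lambda_1\leq\Lambda_2$ appearing in the hypothesis is nothing more than part~(i) of Theorem~\ref{186}, so it is automatically satisfied and imposes no extra condition; it merely serves to locate $\lambda_1$ inside the interval $(\Lambda_1,\Lambda_2]$.

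The core of the argument is then a one-line contradiction. Suppose $g\geq 1$. Then $g-1\geq 0$, and since the weighted area $\area$ is strictly positive, the correction term in \eqref{cor-rewrite} satisfies $-\frac{8\pi(g-1)}{\area}\leq 0$. Substituting into \eqref{cor-rewrite} gives $\lambda_1\leq\Lambda_1$, which directly contradicts the standing hypothesis $\Lambda_1<\lambda_1$. Since the genus $g$ is a nonnegative integer, the only remaining possibility is $g=0$, so $\Sigma$ is topologically a sphere.

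To confirm that the hypotheses are not vacuous, I would verify that the assumption $H_f^2\geq-4(1+m)(1+2m)c$ is exactly what guarantees $\Lambda_1\leq\Lambda_2$, so that the interval $(\Lambda_1,\Lambda_2]$ is genuinely available. A short algebraic computation gives
\begin{equation}\label{cor-gap}
\Lambda_2-\Lambda_1=\frac{H_f^2+4c(1+m)(1+2m)}{2(1+m)(1+2m)},
\end{equation}
and the right-hand side is nonnegative precisely under the stated lower bound on $H_f^2$. The only genuinely mechanical step is the verification of \eqref{cor-gap}, obtained by combining the two fractions over the common denominator $2(1+m)(1+2m)$.

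There is no serious obstacle here: the entire content of the corollary is the recognition that \eqref{cor-rewrite} forces the sign of $g-1$ once $\lambda_1$ is pinned strictly above $\Lambda_1$. The subtlety worth flagging, rather than a difficulty, is that the hypothesis $\lambda_1\leq\Lambda_2$ is redundant given Theorem~\ref{186}(i), so the effective input driving the conclusion is solely the strict inequality $\Lambda_1<\lambda_1$ together with part~(ii).
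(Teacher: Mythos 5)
Your proof is correct and is precisely the argument the paper intends: the corollary is stated as an immediate consequence of Theorem~\ref{186}, and your one-line deduction from part~(ii) (if $g\geq 1$ then $\lambda_1\leq\Lambda_1$, contradicting $\Lambda_1<\lambda_1$) is the only natural route, with your verification that $H_f^2\geq -4(1+m)(1+2m)c$ is equivalent to $\Lambda_1\leq\Lambda_2$ correctly explaining the role of that hypothesis. Your observation that $\lambda_1\leq\Lambda_2$ is redundant in view of Theorem~\ref{186}(i) is also accurate and goes slightly beyond what the paper makes explicit.
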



\medskip

The second result is the following:

\begin{theorem}
Let $(M^3,g, f)$ be a weighted manifold with 
$\overline{\Sec}_f^{2m}\geq c$, for some $c\in \mathbb R$ 
and $Hess f\leq  \sigma\cdot g$ for some real 
function $\sigma$ on $M$.Consider $\psi:\Sigma^2\rightarrow M^3$ an 
isometric immersion of a compact surface with constant weighted mean curvature
$H_{ f}$. Then,
\begin{enumerate}
\item [\bf{(i)}] $\lambda_1\leq-\frac{1}{2}\left(\frac{H_{ f}^2}{1+m}+4c\right)$, 
with equality if and only if $\Sigma$ is totally
umbilical in $M^3$, $\Ric_{ f}^{2m}=2c$ and $d f(N)=\dfrac{m}{1+m}H_{ f}$ on $\Sigma$;

\item [\bf{(ii)}] $\lambda_1\leq-\dfrac{H_{f}^2}{(1+2m)}-\left(4c-\dfrac{\int_\Sigma \sigma\, dv_f}{\area}\right)-\dfrac{8\pi(g-1)}{\area},$  where $g$ denotes the genus of $\Sigma$. 

If  the equality occurs, then $\overline{\Sec}_f^{2m}=c$, 
$\Ric_{f}^{2m}=2c$, $d f(N)=\dfrac{2m}{1+2m}H_ f$, 
$H=\frac{1}{1+2m}H_f$, and $| A|$ is a constant on $\Sigma$. 
Moreover, 
$M^3$ has constant sectional curvature $k$ and $e^{-f}$ 
is the restriction of a coordinate function from the  appropriate
canonical embedding of $\mathbb{Q}^3_k$ in $\mathbb{E}^4$, 
where $\mathbb{E}^4$ is $\mathbb{R}^4$ or $\mathbb{L}^4.$
\end{enumerate}

 \end{theorem}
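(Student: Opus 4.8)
The plan is to run everything through the variational characterization
\[
\lambda_1=\inf_{u\not\equiv 0}\frac{\int_\Sigma\big(|\nabla u|^2-(|A|^2+\Ric_f(N,N))u^2\big)\,dv_f}{\int_\Sigma u^2\,dv_f},
\]
which comes from the self-adjointness of $J_f$ with respect to $dv_f$ and integration by parts on the closed surface $\Sigma$. Testing with $u\equiv 1$ gives at once $\lambda_1\le-\frac{1}{\area}\int_\Sigma(|A|^2+\Ric_f(N,N))\,dv_f$, so both items reduce to a pointwise lower bound for $|A|^2+\Ric_f(N,N)$. First I would observe that the single hypothesis $\overline{\Sec}_f^{2m}\ge c$ already controls the normal direction: for an orthonormal frame $\{e_1,e_2,N\}$ the two inequalities $\overline{\Sec}_f^{2m}(N,e_1)\ge c$ and $\overline{\Sec}_f^{2m}(N,e_2)\ge c$, added together, combine the two copies of $\tfrac12(\Hess f(N,N)-\tfrac{(df(N))^2}{2m})$ with $\overline{\Sec}(N,e_1)+\overline{\Sec}(N,e_2)=\Ric(N,N)$ to yield $\Ric_f^{2m}(N,N)\ge 2c$, hence $\Ric_f(N,N)\ge 2c+\tfrac{(df(N))^2}{2m}$. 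For part (i) I would then use the umbilicity bound $|A|^2\ge\tfrac{H^2}{2}$, write $H=H_f-df(N)$, and minimize $t\mapsto\tfrac{(H_f-t)^2}{2}+\tfrac{t^2}{2m}+2c$ over $t=df(N)$; the minimum $\tfrac{H_f^2}{2(1+m)}+2c$ is attained at $t=\tfrac{m}{1+m}H_f$, which upon negation gives the stated bound, with equality forcing umbilicity, $\Ric_f^{2m}(N,N)=2c$ and $df(N)=\tfrac{m}{1+m}H_f$.

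For part (ii) I would instead keep the exact trace information via the Gauss equation $|A|^2=H^2-2K+2\overline{\Sec}_\Sigma$. The new input is a lower bound for $\overline{\Sec}_\Sigma=\overline{\Sec}(e_1,e_2)$: averaging $\overline{\Sec}_f^{2m}(e_1,e_2)\ge c$ and $\overline{\Sec}_f^{2m}(e_2,e_1)\ge c$ and discarding the nonnegative $(df(e_i))^2$ terms gives $\overline{\Sec}_\Sigma\ge c-\tfrac14(\Hess f(e_1,e_1)+\Hess f(e_2,e_2))$, and $\Hess f\le\sigma g$ upgrades this to $2\overline{\Sec}_\Sigma\ge 2c-\sigma$. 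Combined with the normal bound, this produces
\[
|A|^2+\Ric_f(N,N)\ge (H_f-t)^2+\tfrac{t^2}{2m}-2K+4c-\sigma ,
\]
and now minimizing $(H_f-t)^2+\tfrac{t^2}{2m}$ over $t$ yields $\tfrac{H_f^2}{1+2m}$, attained at $t=\tfrac{2m}{1+2m}H_f$. Integrating against $dv_f$ and using that $H_f$ is constant gives
\[
\lambda_1\le-\frac{H_f^2}{1+2m}-4c+\frac{\int_\Sigma\sigma\,dv_f}{\area}+\frac{2}{\area}\int_\Sigma K\,dv_f .
\]

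The hard part is the last term: I must turn $\tfrac{2}{\area}\int_\Sigma K\,dv_f$ into the topological quantity $-\tfrac{8\pi(g-1)}{\area}$. In the unweighted setting ordinary Gauss--Bonnet gives $\int_\Sigma K\,dv=4\pi(1-g)$ and one is done, but here the density $e^{-f}$ intervenes and what is actually needed is a weighted Gauss--Bonnet estimate $\int_\Sigma K\,dv_f\le 2\pi\chi(\Sigma)$. I expect to obtain it by passing to the conformal metric $\hat g=e^{-f}g$ on $\Sigma$, whose area element is precisely $dv_f$, and controlling the resulting curvature correction through $\int_\Sigma\Delta_\Sigma f\,dv_f\ge 0$, with $\Hess f\le\sigma g$ entering to sign the relevant term; this is the one place where the geometry of the density is genuinely used, and I regard it as the main obstacle.

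Finally, for the rigidity statement I would trace back every inequality. Equality in the Rayleigh quotient makes $u\equiv 1$ a first eigenfunction, so $|A|^2+\Ric_f(N,N)$ is constant; equality in the $\overline{\Sec}_\Sigma$ estimate forces $df(e_1)=df(e_2)=0$, i.e. $\nabla f=df(N)\,N$ is normal along $\Sigma$, together with $\overline{\Sec}_f^{2m}=c$ and $\Ric_f^{2m}=2c$; and equality in the optimization forces $df(N)=\tfrac{2m}{1+2m}H_f$, whence $H=\tfrac{1}{1+2m}H_f$ and $|A|$ is constant. The global conclusion, that $M^3$ has constant curvature $k$ and $e^{-f}$ is the restriction of a coordinate function of the canonical embedding $\mathbb{Q}^3_k\hookrightarrow\mathbb{E}^4$, would then follow from the rigidity imposed by $\overline{\Sec}_f^{2m}\equiv c$ together with the Tashiro--Obata-type Hessian equation satisfied by $f$, invoked from the structure theory of weighted space forms cited in the introduction.
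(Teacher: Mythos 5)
Everything in your proposal up to the Gauss--Bonnet step reproduces the paper's own proof. Part (i) is exactly the paper's Theorem \ref{274}(i): constant test function, $|A|^2\geq H^2/2$, and the quadratic optimization over $t=df(N)$ with minimizer $t=\frac{m}{1+m}H_f$, which is the paper's inequality (\ref{249}); your observation that tracing $\overline{\Sec}_f^{2m}\geq c$ over an orthonormal frame yields $\Ric_f^{2m}(N,N)\geq 2c$ is the same reduction by which the paper deduces its item (i) from Theorem \ref{274}. In part (ii), your estimate $2\overline{\Sec}_\Sigma\geq 2c-\sigma$, obtained by symmetrizing $\overline{\Sec}_f^{2m}(e_1,e_2)$ and $\overline{\Sec}_f^{2m}(e_2,e_1)$, discarding the $(df(e_i))^2$ terms and invoking $\Hess f\leq\sigma\cdot g$, is precisely how the paper passes from (\ref{308}) to (\ref{569}); your optimization giving $H_f^2/(1+2m)$ at $t=\frac{2m}{1+2m}H_f$ is again (\ref{249}), and your rigidity analysis (constancy of $|A|$, $H=\frac{1}{1+2m}H_f$, tangential part of $\nabla f$ vanishing, then a Tashiro--Obata-type classification) matches the paper's, which runs through its Lemma \ref{261}. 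The paper's only technical difference is that it works with the identity $\lambda_1=-\frac{1}{\area}\bigl(\alpha+\int_\Sigma(|A|^2+\Ric_f(N,N))\bigr)$, $\alpha=\int_\Sigma\rho^{-2}|\g\rho|^2\geq0$, rather than the Rayleigh quotient with $u\equiv1$; these are equivalent, including for the equality discussion.

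The genuine gap is the one you flagged yourself: you need $\int_\Sigma K\,dv_f\leq 2\pi\chi(\Sigma)$, and your proposed conformal argument cannot deliver it. For $\hat g=e^{-f}g$ one has $d\hat v=dv_f$ but $\hat K=e^{f}\bigl(K+\frac{1}{2}\Delta_\Sigma f\bigr)$, so Gauss--Bonnet for $\hat g$ reads $\int_\Sigma\bigl(K+\frac{1}{2}\Delta_\Sigma f\bigr)\,dv=2\pi\chi(\Sigma)$: the density cancels, the correction term pairs with the \emph{unweighted} element $dv$ (where it integrates to zero), and the nonnegativity of $\int_\Sigma\Delta_\Sigma f\,dv_f=\int_\Sigma|\g f|^2\,dv_f$ never couples to anything. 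Moreover the inequality is simply false for a general density: on a non-flat torus, concentrating $e^{-f}$ where $K>0$ gives $\int_\Sigma K\,dv_f>0=2\pi\chi(\Sigma)$, so any proof would have to use the constant-$H_f$ and curvature hypotheses in an essential way. You should know, however, that the paper does not close this step either: in its preliminaries it converts $-\int_\Sigma 2K\,dv_f$ into $8\pi(g-1)$ ``using the Gauss--Bonnet theorem'' while stipulating that all integrals are taken against $dv_f$ --- exactly the unjustified substitution you declined to make --- and this enters the derivation of (\ref{308}), hence of item (ii) here. So on this point you have isolated a real lacuna in the paper rather than missed an idea it supplies. A smaller leap you share with the paper: equality gives $\overline{\Sec}_f^{2m}=c$ only along $\Sigma$, whereas Lemma \ref{261} requires it on all of $M$ to conclude that $M^3$ is a space form.
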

 
 \medskip

Now, we will introduced the notion of stability  aiming to announce some consequences.  
 
 \begin{definition}
 Under the above notation. We say that a surface $\Sigma$ is stable 
 if the first eigenvalue $\lambda_1$ of the Jacobi operator is nonnegative. 
 Otherwise, we say that $\Sigma$ is unstable.
  \end{definition}
 
 Our second consequence is:

\begin{corollary}
Under the assumptions of the {Theorem \ref{186}}. 
 \begin{enumerate}
  \item[{(i)}] If $c>0$, then $\Sigma$ cannot be stable;
  \item[{(ii)}] If $c=0$, but $H_f\neq0$,  then $\Sigma$ cannot be stable;
  \item[{(iii)}] If $c=H_f=0$, and the genus $g\geq2$, then $\Sigma$ cannot be stable;
  \item[{(iv)}] If $c=0$ and $\Sigma$ is stable, then $H_f=0$.
 \end{enumerate}
\end{corollary}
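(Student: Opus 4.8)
The plan is to read off everything directly from the two upper bounds in Theorem \ref{186}, together with the observation that $\Sigma$ is stable precisely when $\lambda_1 \geq 0$. Since $m > 0$ and the weighted area $\area$ is positive, the denominators $1+m$, $1+2m$, and $\area$ appearing in those bounds are all strictly positive, so the sign of each estimate is governed entirely by its numerator. The strategy for the instability statements (i)--(iii) is to exhibit, in each case, a choice of bound whose right-hand side is strictly negative; the strategy for (iv) is to squeeze $\lambda_1$ between $0$ and a nonpositive quantity.

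For (i) I would invoke bound (i) of Theorem \ref{186}, rewritten as
\[
\lambda_1 \leq -\frac{H_f^2}{2(1+m)} - 2c.
\]
When $c > 0$ the second term is strictly negative and the first is nonpositive, so $\lambda_1 \leq -2c < 0$ and $\Sigma$ is unstable. For (ii), with $c = 0$ the same bound reduces to $\lambda_1 \leq -\frac{H_f^2}{2(1+m)}$, which is strictly negative as soon as $H_f \neq 0$; hence instability again. For (iii), where $c = H_f = 0$, bound (i) degenerates to $\lambda_1 \leq 0$ and is no longer decisive, so here I would switch to bound (ii), which collapses to
\[
\lambda_1 \leq -\frac{8\pi(g-1)}{\area}.
\]
For genus $g \geq 2$ one has $g - 1 \geq 1$, so the right-hand side is strictly negative and $\Sigma$ cannot be stable.

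Finally, for (iv) I would combine stability with bound (i) at $c = 0$: stability gives $\lambda_1 \geq 0$, while bound (i) gives $\lambda_1 \leq -\frac{H_f^2}{2(1+m)} \leq 0$. The two inequalities force $\lambda_1 = 0$ and $\frac{H_f^2}{2(1+m)} = 0$, whence $H_f = 0$. I do not anticipate any genuine obstacle here: the corollary is a direct sign analysis of estimates already established, and the only point requiring a moment's care is the choice of bound in (iii), since bound (i) becomes vacuous exactly when $c = H_f = 0$ and it is the topological term in bound (ii) that must then be brought into play.
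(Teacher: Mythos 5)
Your proof is correct and takes essentially the same route as the paper, which obtains these four items as direct rewrites of its Corollary \ref{equation 5}: the identical sign analysis of the two bounds in Theorem \ref{186}, with bound (i) settling the cases $c>0$, $c=0$ with $H_f\neq 0$, and the squeeze in (iv), and the genus term in bound (ii) settling $c=H_f=0$ with $g\geq 2$. Your explicit remark that bound (i) becomes vacuous exactly when $c=H_f=0$, forcing the switch to the topological term, is the same pivot the paper makes implicitly.
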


\medskip

For the next result we recall that a Self-shrinker of the mean curvature flow is an oriented surface 
$x: \Sigma \to \mathbb{R}^3$ such that 
$$H=-\frac{1}{2}\langle x,N\rangle,$$
where $N$ is an unit normal vector field on $\Sigma$. So, if we consider $\mathbb{R}^3$ endowed 
with the function $f(x)=\frac{|x|^2}{4},$ then a Self-shrinker is a $f$-minimal surface in the Euclidean space.

\medskip

The next result is a consequence of the proof of the Theorem \ref{186} and it reads as follows:

\begin{corollary}
All compact  self-shrinker in the 3-dimensional Euclidean space is unstable.
\end{corollary}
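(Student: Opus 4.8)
The plan is to isolate the single inequality that drives the proof of Theorem \ref{186}(i) — the one obtained by inserting the constant function into the Rayleigh quotient of the weighted Jacobi operator — and then to evaluate it for a self-shrinker. Since $\Sigma$ is compact without boundary, the first eigenvalue of $J_f$ has the variational characterization
\begin{equation*}
\lambda_1 = \inf_{u \not\equiv 0} \frac{\int_\Sigma \big(|\nabla u|^2 - (|A|^2 + \Ric_f(N,N))\,u^2\big)\, dv_f}{\int_\Sigma u^2\, dv_f},
\end{equation*}
where the weighted Green identity $\int_\Sigma (-\Delta_f u)\,u\, dv_f = \int_\Sigma |\nabla u|^2\, dv_f$ has been used (this is where the compactness and the absence of any volume constraint in the definition of $\lambda_1$ enter). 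Testing with $u \equiv 1$, exactly as in the proof of Theorem \ref{186}, gives the key estimate
\begin{equation*}
\lambda_1\, \area \leq -\int_\Sigma \big(|A|^2 + \Ric_f(N,N)\big)\, dv_f,
\end{equation*}
valid for every closed surface of constant weighted mean curvature.

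First I would recast the self-shrinker condition in the weighted language. For $f(x)=\frac{|x|^2}{4}$ on $\mathbb{R}^3$ one has $\nabla f = \frac{x}{2}$, so $H_f = H + \langle N,\nabla f\rangle = H + \tfrac12\langle x,N\rangle = 0$; hence $\Sigma$ is $f$-minimal and in particular has constant weighted mean curvature, so the displayed estimate applies. Next I would compute the potential term of $J_f$: as $\mathbb{R}^3$ is flat, $\Ric = 0$, and since $f$ is quadratic, $\Hess f = \tfrac12\, g$, so that $\Ric_f = \Ric + \Hess f = \tfrac12\, g$ and therefore $\Ric_f(N,N) = \tfrac12$ at every point of $\Sigma$, regardless of the chosen orientation.

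Substituting this value into the key estimate yields
\begin{equation*}
\lambda_1\, \area \leq -\int_\Sigma \Big(|A|^2 + \tfrac12\Big)\, dv_f < 0,
\end{equation*}
the strict sign being forced by the positive constant $\tfrac12$ together with $|A|^2 \geq 0$ and the positivity of $dv_f$; as $\area > 0$, this gives $\lambda_1 < 0$, i.e.\ $\Sigma$ is unstable. I do not anticipate a genuine obstacle: the entire argument rests on the fact that for the Gaussian-type density the Bakry--\'Emery Ricci curvature $\Ric_f(N,N)$ is the strictly positive constant $\tfrac12$, so the constant test function alone destabilizes the surface. The only points deserving care are the admissibility of the constant as a test function — handled by compactness and the unconstrained definition of $\lambda_1$ above — and the elementary identity $\Hess f = \tfrac12\, g$, which is immediate.
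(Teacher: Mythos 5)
Your proof is correct and is essentially the argument the paper intends: the corollary is announced as ``a consequence of the proof of Theorem \ref{186}'', and that consequence is precisely your step of testing the Rayleigh quotient (\ref{192}) with $u\equiv 1$ to get $\lambda_1\,\area\leq-\int_\Sigma(|A|^2+\Ric_f(N,N))\,dv_f$, combined with the observations that a self-shrinker is $f$-minimal (so $H_f=0$ is constant and the estimate applies) and that for $f=|x|^2/4$ one has $\Hess f=\frac{1}{2}g$, hence $\Ric_f(N,N)=\frac{1}{2}>0$, forcing $\lambda_1<0$. Your handling of the admissibility of the constant test function and the orientation-independence of $\Ric_f(N,N)$ is careful and matches the paper's setup; no gap.
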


More generally, the triple $(\mathbb{R}^3,\delta_{ij}, |x|^2/4)$ is known as {\it Gaussian space} 
and using this notation we have that

\begin{corollary}
All compact surface in the Gaussian space with constant weighted mean curvature  is unstable.
\end{corollary}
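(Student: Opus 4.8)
The plan is to bypass a direct application of Theorem~\ref{186}: in the Gaussian space the ambient is flat, so $\overline{\Sec}\equiv 0$ and the hypothesis $\overline{\Sec}\geq c$ forces $c\leq 0$, which by itself does not yield a strictly negative upper bound for $\lambda_1$. Instead I would return to the very first step in the proof of Theorem~\ref{186}, namely the estimate obtained by inserting the constant function $u\equiv 1$ into the Rayleigh quotient for the weighted Jacobi operator. Since $\Sigma$ is compact, the first eigenvalue of $J_f$ is characterized by
\begin{equation*}
\lambda_1=\min_{u\neq 0}\frac{\int_\Sigma\left(|\nabla u|^2-(|A|^2+\Ric_f(N,N))u^2\right)dv_f}{\int_\Sigma u^2\,dv_f},
\end{equation*}
and testing with $u\equiv 1$ (admissible because $\area<\infty$) immediately gives
\begin{equation*}
\lambda_1\leq -\frac{1}{\area}\int_\Sigma\big(|A|^2+\Ric_f(N,N)\big)\,dv_f .
\end{equation*}

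The next step is to evaluate $\Ric_f(N,N)$ for $(\mathbb{R}^3,\delta_{ij},|x|^2/4)$. Because the Euclidean metric is flat, $\Ric\equiv 0$, while $f=|x|^2/4$ has $\Hess f=\tfrac12\,\delta_{ij}$; hence $\Ric_f=\Ric+\Hess f=\tfrac12\,g$, and in particular $\Ric_f(N,N)=\tfrac12$ at every point of $\Sigma$, independently of the (constant) value of $H_f$. Since $|A|^2\geq 0$, the integrand satisfies $|A|^2+\Ric_f(N,N)\geq \tfrac12>0$, and therefore
\begin{equation*}
\lambda_1\leq -\frac{1}{\area}\int_\Sigma \tfrac12\,dv_f=-\tfrac12<0 .
\end{equation*}
By the definition of stability this shows $\Sigma$ is unstable, which is exactly the claim; note that the self-shrinker corollary is the special case $H_f=0$ and follows identically.

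I do not expect a genuine obstacle here, since the argument is a short specialization of the opening inequality in the proof of Theorem~\ref{186}. The only points requiring care are bookkeeping rather than substance: fixing the sign convention in the eigenvalue problem $J_f u+\lambda u=0$ so that the Rayleigh quotient above indeed furnishes an upper bound for $\lambda_1$; verifying that $u\equiv 1$ lies in the relevant function space on the compact surface; and, most importantly, recognizing that one should \emph{not} attempt to feed the Gaussian space into Theorem~\ref{186} with $c>0$ (which is impossible, as $\overline{\Sec}\equiv 0$) but rather exploit the exact positivity $\Ric_f(N,N)=\tfrac12$ coming from $\Hess f=\tfrac12\,g$. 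This positivity is what makes the constant-function estimate strictly negative, and it is robust to discarding both the $|A|^2$ term and all dependence on $H_f$.
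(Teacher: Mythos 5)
Your proposal is correct and is essentially the paper's own argument: the corollary is stated there as a consequence of the proof of Theorem \ref{186}, whose opening step is exactly your move --- testing the Rayleigh quotient (\ref{192}) with $u\equiv 1$ --- and in the Gaussian space $\Ric_f=\Hess f=\tfrac{1}{2}\,g$ turns that estimate into $\lambda_1\leq -\tfrac{1}{2}<0$ independently of $H_f$, with the self-shrinker case being $H_f=0$. Your remark that one should not force the theorem's statement with $c>0$ (impossible since $\overline{\Sec}\equiv 0$) but instead exploit the proof's first inequality is precisely the paper's intent.
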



\section{Preliminaries}

\medskip

An important result for us is the classification of  weighted manifolds with constant weighted sectional curvature.
The result below follows closely the one in \cite{wylie}, and we include the proof here for 
the sake of completeness.

\begin{lemma}\label{261}
Let $(M^3,g,f)$ be a weighted manifold. Assume that $\overline{\Sec}_f=c$, then $M$
has constant sectional curvature $k$, for some $k\in\mathbb R$. Moreover, $c=-(m-1)k$ and if $f$ is a non constant function, then $u=e^{-f/m}$ is the restriction of a coordinate function from the appropriate canonical embedding of a space form of curvature $k$, $\mathbb{Q}^3_k$, in $\mathbb{E}^4$, where $\mathbb{E}^4$ is $\mathbb{R}^4$ or $\mathbb{L}^4.$
\end{lemma}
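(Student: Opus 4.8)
The plan is to convert the pointwise curvature hypothesis into a single Obata--Tashiro equation for the density and then to quote the classical rigidity for such equations on space forms. First, fix $p\in M$ and an orthonormal pair $X,Y\in T_pM$, and write the hypothesis $\overline{\Sec}_f=c$ as
\[
\overline{\Sec}(X,Y)+P(X,X)=c ,
\]
where $P$ denotes the quadratic form in $X$ (a combination of $\Hess f(X,X)$ and $(df(X))^2$) that the definition of $\overline{\Sec}_f$ adds to the sectional curvature. Since $\overline{\Sec}(X,Y)$ is symmetric in $X,Y$ while $P(X,X)$ depends on $X$ alone, interchanging $X$ and $Y$ gives $P(X,X)=P(Y,Y)$ for every orthonormal pair. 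In dimension three, given unit vectors $X,Z$ one can pick a unit $Y$ orthogonal to both, and applying this to the pairs $\{X,Y\}$ and $\{Z,Y\}$ yields $P(X,X)=P(Y,Y)=P(Z,Z)$; hence $P(X,X)$ is independent of the unit vector $X$, so $P=\mu\,g$ pointwise for some function $\mu$. Substituting back, $\overline{\Sec}(X,Y)=c-\mu$ for every tangent plane at $p$, i.e. $M$ is pointwise isotropic. As $\dim M=3\ge 3$, Schur's lemma promotes this to constant sectional curvature $k$, whence $\mu=c-k$ is constant and the correction tensor satisfies $P=(c-k)\,g$.

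Next I would linearize by the substitution $u=e^{-f/m}$. The exponent $1/m$ is chosen precisely so that, when $P$ is rewritten in terms of $\Hess u$ and $du\otimes du$, the first-order term $du\otimes du$ cancels; the identity $P=(c-k)\,g$ then collapses to an equation of the form $\Hess u+k\,u\,g=0$, and matching the constant factors produces the asserted relation $c=-(m-1)k$. At this stage the statement is purely Riemannian: $u>0$ is a nonconstant solution of the Obata equation $\Hess u+k\,u\,g=0$ on a space form of constant curvature $k$.

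The heart of the argument, and the step I expect to be the main obstacle, is this final classification. I would invoke the Obata--Tashiro--Kanai rigidity: restricting $\Hess u+k\,u\,g=0$ to a unit-speed geodesic $\gamma$ reduces it to the scalar ODE $(u\circ\gamma)''+k\,(u\circ\gamma)=0$, whose explicit solutions (trigonometric for $k>0$, affine for $k=0$, hyperbolic for $k<0$) force $u$ to coincide with the restriction of a linear coordinate function under the canonical isometric embedding $\mathbb{Q}^3_k\hookrightarrow\mathbb{E}^4$, with $\mathbb{E}^4=\mathbb{R}^4$ for $k\ge 0$ and $\mathbb{E}^4=\mathbb{L}^4$ for $k<0$. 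This is exactly the description claimed for $u=e^{-f/m}$, and it completes the proof. The global and completeness aspects underlying this last step are those treated in \cite{wylie}, whose argument I would follow.
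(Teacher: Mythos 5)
Your proposal is correct and follows essentially the same route as the paper's own proof: the symmetry of $\overline{\Sec}(X,Y)$ in $X,Y$ forces the correction tensor to be conformal to $g$, Schur's lemma upgrades pointwise isotropy to constant sectional curvature $k$ (the paper traces to the Ricci tensor first and applies Schur there, you apply the sectional-curvature version directly --- an immaterial difference), the substitution $u=e^{-f/m}$ linearizes the conformal identity into an Obata--Tashiro equation, and the Tashiro rigidity yields the coordinate-function description of $u$. Your ``matching the constant factors'' is exactly the paper's computation of the radial sectional curvature of the warped-product splitting $g=dt^2+(u')^2g_0$, which gives $(c+(m-1)k)u'=0$ and hence $c=-(m-1)k$ when $f$ is nonconstant, so no genuine step is missing.
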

\begin{proof}
Let $X$ and $Y$ be an unit and orthogonal vectors on $M$. Then, by equation (\ref{102}), we get
$$c= \overline{\Sec}(X,Y) + \frac{1}{2}
\left(\Hess f(X,X)-\frac{(df(X))^2}{2m}\right)$$
and
$$c= \overline{\Sec}(Y,X) + \frac{1}{2}
\left(\Hess f(Y,Y)-\frac{(df(Y))^2}{2m}\right).$$

So, there exists a smooth function $w: M\to \mathbb{R}$ such that 
$$\Hess f-\frac{df\otimes df}{2m} = w\cdot g.$$
Then, letting $\{E_1, E_2, X\}$ be an orthonormal frame we have
$$2c = \sum_{i=1}^2\overline{\Sec}_f(X,E_i)=\Ric(X,X) +2w.$$

Thus, by Schur's Lemma, $w$ is a constant function and so $M$ has constant 
sectional curvature, say $k$. Defining the function 
$u=e^{-f/m}$, we have that
\begin{equation}\label{278}\Hess u = -\dfrac{c-k}{m}u\cdot g.\end{equation}
So, by Lemma 1.2 in \cite{t},
\begin{equation}\label{285} g = dt^2 + (u')^2g_0,\end{equation}
where $g_0$ is a local metric on a surface orthogonal to $\nabla u$ (a level set of $u$) and $u'$ denotes
the derived of $u$ in the direction of the gradient  of $u$.

Computing the  radial sectional curvature of the metric (\ref{285}), we have  $(c+(m-1)k)u'=0.$ 
Since $f$ is non constant, we have that $c=-(m-1)k.$ Moreover, as $u$ satisfies
equations (\ref{278}) and (\ref{285}), $u$ is the restriction of a coordinate function from the 
appropriate  canonical embedding of $\mathbb{Q}^3_k$ in $\mathbb{E}^4$, 
where $\mathbb{E}^4$ is $\mathbb{R}^4$ or $\mathbb{L}^4.$

\end{proof}

Now we consider a first eigenfunction $\rho\in C^{\infty}(\Sigma)$ of the Jacobi
operator $J_f$, that is,
$J_{ f}\rho=-\lambda_1\rho$;
or  equivalently, 
\begin{equation}\label{182}
 \Delta_{ f}\rho= (\lambda_1+| A|^2+\Ric_{ f}(N,N))\rho.
\end{equation}
Furthermore, $\lambda_1$ is simple and it is characterized  by  
\begin{equation}\label{192} 
 \lambda_1=\inf\left\{\frac{-\int_{\Sigma}uJ_{f}u\, dv_f}{\int_{\Sigma}u^2\, dv_f}:\,u\in 
C^{\infty}(\Sigma),\,u\neq0\right\}.
\end{equation}

We observe that the first eigenfunction of an elliptic second-order  differential
operator has a sign. Therefore, without loss of generality,  we can assume that
$\rho >0.$ In what follows, we emphasize that every integrations are with respect
to the weighted area element.

\medskip

Thus, 
\begin{eqnarray*} \Delta_{ f}\ln\rho&=&\Delta\ln\rho-\langle\g f,\g\!\ln\rho\rangle\\
&=&\di_{}(\g\!\ln\rho)-\langle\g f,\rho^{-1}\g\rho\rangle\\
&=&\di(\rho^{-1}\g \rho)-\rho^{-1}\langle\g f,\g\rho\rangle\\
&=&\rho^{-1}\di(\g\rho)+\langle\g\rho^{-1},\g \rho\rangle-\rho^{-1}\langle\g f,\g\rho\rangle\\
&=&\rho^{-1}(\Delta\rho-\langle\g f,\g\rho\rangle)-\rho^{-2} |\g \rho|^2\\
&=&\rho^{-1}\Delta_{ f}\rho-\rho^{-2}|\g\rho|^2\\
&=&-(\lambda_1+| A|^2+\Ric_{ f}(N,N))-\rho^{-2}|\g\rho|^2.
\end{eqnarray*}
Integrating the equality above on $\Sigma$ and using the divergence theorem we have that
$$
\alpha:=\int_{\Sigma}\rho^{-2}|\g\rho|^2= - \int_{\Sigma}(| A|^2+\Ric_{ f}(N,N)) - \lambda_1\area,
$$
where $\alpha\geq 0$ defines 
a simple invariant that is independent of the choice
of $\rho$, because $\lambda_1$ is simple. In other words
$$
\lambda_1=-\frac{1}{\area}\left(\alpha+\int_{\Sigma}(| A|^2+\Ric_{ f}(N,N))\right).
$$ 


Let $\{E_i\}$  be an orthonormal frame in $T\Sigma$ and $\{a_{ij}\}$ the 
coefficients of $A$ in the frame, 
using the Gauss equation
$$K=\overline{\Sec}_{\Sigma}-\langle A(X),Y\rangle^2+
\langle A(X),X\rangle \langle A(Y),Y\rangle,$$
we have that 
$$
K-\overline{\Sec}_{\Sigma} = a_{11}a_{22} - a_{12}^2 = 
\dfrac{1}{2}\left((a_{11}+a_{22})^2 - \sum_{i,j=1}^2a_{ij}^2\right) 
= \dfrac{1}{2} \left(H^2 - |A|^2\right),
$$
hence
\begin{equation}\label{232}
| A|^2=H^2+2(\overline{\Sec}_{\Sigma}-K).
\end{equation} 
Using the Gauss\,-Bonnet theorem and the definition of weighted
mean curvature, $H_{ f}=H+\langle N,\g f\rangle$, we obtain that
$$
 \lambda_1=-\frac{1}{\area}\left\{\alpha-8\pi(1-g)+
 \int_{\Sigma}[H^2+2\overline{\Sec}_{\Sigma}+\Ric_{ f}(N,N)]\right\},
 $$
 that is,
$$
\lambda_1=-\frac{1}{\area}\left\{\alpha-8\pi(1-g)+\int_{\Sigma}(H_{ f}-\langle
N,\g f\rangle)^2+
\int_{\Sigma}[2\overline{\Sec}_{\Sigma}+\Ric_{ f}(N,N)]\right\}.
$$
Moreover, we know that for all  $a,b\in\mathbb R$ and $m>-1$, it holds  that
\begin{equation}\label{249}
(a+b)^2\geq\frac{a^2}{1+m}-\frac{b^2}{m}, 
\end{equation}
with equality if and only if  $b=-\frac{m}{1+m}a.$

Applying this inequality in the term $(H_{ f}-\langle N,\g f\rangle)^2$  and using the definition of
$
\Ric_f^{2m},
$
we obtain that 
$$ 
\hspace{-0.25cm}\lambda_1\leq -\frac{1}{\area}\left\{\alpha+8\pi(g-1)+
\int_{\Sigma}\left[\frac{H_{ f}^2}{1+2m}-\frac{\langle
N,\g f\rangle^2}{2m}\right]+ \int_{\Sigma}\left[2\overline{\Sec}_{\Sigma}+
\Ric_{ f}(N,N)\right]\right\},
$$
i.e., 
\begin{equation}\label{308}
\hspace{1cm}\lambda_1\leq -\frac{H_{ f}^2}{1+2m}-\frac{1}{\area}\left\{\alpha+8\pi(g-1)+
\int_{\Sigma}\left(\Ric_{ f}^{2m}(N, N)+2\overline{\Sec}_{\Sigma}\right)\right\}.
\end{equation}

To finalize this section, we recall the traceless of the second fundamental form of $\Sigma$, 
that is, the  tensor $\phi$ defined by $\phi=A-\frac{H}{2}I$,
where $I$ denotes the  identity endomorphism on $T\Sigma$. 
We note that $tr(\phi)=0$ and
$|\phi|^2=| A|^2-\frac{H^2}{2}\geq0$, 
with equality if and only if $\Sigma$ is totally umbilical, where
$ | \phi |^2$ is the Hilbert-Schmidt norm.

In the literature, $\phi$  is know as the total umbilicity 
tensor of $\Sigma$. In terms of  $\phi$, the Jacobi operator is rewritten as  
\begin{equation}\label{139}
J_{ f}u=\Delta_{ f} u+\left(|\phi|^2+
\frac{H^2}{2}+\Ric_{ f}(N,N)\right)u.
\end{equation}

We use exactly this expression in next section to obtain a estimate 
of the first eigenvalue of the weighted Jacobi operator.

\section{Main Results}
\begin{theorem}\label{274}
Let $(M^3,g, f)$ be a weighted manifold with $\Ric_{ f}^{2m}\geq2c$ and 
 $\overline{\Sec}\geq c$. Consider $\psi:\Sigma^2\rightarrow M^3$ an 
isometric immersion of a compact surface  with constant weighted mean curvature
$H_{ f}$. 
Then, 
\begin{enumerate}
\item [\bf{(i)}] $\lambda_1\leq-\frac{1}{2}\left(\frac{H_{ f}^2}{1+m}+4c\right)$, with 
equality if and only if $\Sigma$ is totally
umbilical in $M^3$, $\Ric_{ f}^{2m}=2c$ and $d f(N)=\dfrac{m}{1+m}H_{ f}$ on $\Sigma$.
\item [\bf{(ii)}] $\lambda_1\leq-\dfrac{H_{ 
f}^2}{(1+2m)}-4c-\dfrac{8\pi(g-1)}{\area}$, where $g$ denotes the genus of $\Sigma$. 
Furthermore,
the equality occurs if and only if  $K$ is a constant, $\overline{\Sec}_{\Sigma}=c$,
$\Ric_{ f}^{2m}=2c$ and $d f(N)=\dfrac{2m}{1+2m}H_ f$ on $\Sigma$.
\end{enumerate}
\end{theorem}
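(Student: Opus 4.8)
The plan is to read both estimates directly off the closed-form expressions for $\lambda_1$ assembled in the Preliminaries. Writing the first eigenfunction as a positive $\rho$ and setting $\alpha=\int_\Sigma\rho^{-2}|\g\rho|^2\ge0$, we have the exact identity $\lambda_1=-\frac{1}{\area}\big(\alpha+\int_\Sigma(|A|^2+\Ric_f(N,N))\big)$. In both parts the strategy is identical: bound the integrand from below using the curvature hypotheses together with the algebraic inequality \eqref{249}, and then discard the two manifestly nonnegative terms, $\alpha$ and the umbilicity defect $|\phi|^2$.

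For \textbf{(i)} I would start from the $\phi$-form \eqref{139} of the Jacobi operator, which turns the identity into $\lambda_1=-\frac{1}{\area}\big(\alpha+\int_\Sigma(|\phi|^2+\tfrac{H^2}{2}+\Ric_f(N,N))\big)$. The one substantive step is to control $\tfrac{H^2}{2}+\Ric_f(N,N)$ from below. Substituting $H=H_f-\langle N,\g f\rangle$ and applying \eqref{249} with parameter $m$ to $(H_f-\langle N,\g f\rangle)^2$ creates a leftover $-\tfrac{\langle N,\g f\rangle^2}{2m}$; the key observation is that this is precisely the term needed to turn $\Ric_f(N,N)$ into $\Ric_f^{2m}(N,N)$ through \eqref{104}. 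Thus $\tfrac{H^2}{2}+\Ric_f(N,N)\ge\frac{H_f^2}{2(1+m)}+\Ric_f^{2m}(N,N)\ge\frac{H_f^2}{2(1+m)}+2c$, and dropping $\alpha,|\phi|^2\ge0$ gives the stated bound.

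For \textbf{(ii)} I would continue directly from the inequality \eqref{308}, which is the same manoeuvre carried out with parameter $2m$ and combined with Gauss--Bonnet. There it only remains to insert the pointwise hypotheses $\Ric_f^{2m}(N,N)\ge2c$ and $\overline{\Sec}_\Sigma\ge c$ (the latter holding since $\overline{\Sec}_\Sigma$ is a sectional curvature of $M$) into the integral and to discard $\alpha\ge0$; this immediately produces $\lambda_1\le-\frac{H_f^2}{1+2m}-4c-\frac{8\pi(g-1)}{\area}$.

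The main obstacle is the equality discussion, which amounts to tracing back each inequality. Equality forces $\alpha=0$, hence $\rho$ constant; substituting into \eqref{182} shows the potential $|A|^2+\Ric_f(N,N)$ is constant. It also forces pointwise equality in \eqref{249} (giving $d f(N)=\frac{m}{1+m}H_f$ in (i) and $\frac{2m}{1+2m}H_f$ in (ii)), pointwise equality $\Ric_f^{2m}(N,N)=2c$, and, in (i), $|\phi|^2\equiv0$, i.e. total umbilicity. The delicate point occurs in (ii), where one must deduce that $K$ is constant: since $H_f$ is constant and $\langle N,\g f\rangle=\frac{2m}{1+2m}H_f$, both $\langle N,\g f\rangle$ and $H=H_f-\langle N,\g f\rangle$ are constant, whence $\Ric_f(N,N)$ and then $|A|^2$ are constant, and the Gauss relation \eqref{232} with $\overline{\Sec}_\Sigma=c$ forces $K$ constant. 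The converse is routine: under the listed conditions the potential $|A|^2+\Ric_f(N,N)$ is constant, so the constant function is the first eigenfunction and every intermediate inequality becomes an equality.
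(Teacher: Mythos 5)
Your proposal is correct and follows essentially the same route as the paper: the bound in (i) comes from the decomposition $\frac{H^2}{2}+\Ric_f(N,N)\geq\frac{H_f^2}{2(1+m)}+\Ric_f^{2m}(N,N)$ via inequality \eqref{249} (the paper gets it by plugging $u=1$ into \eqref{192}, which is equivalent to your use of the exact identity with $\alpha$), the bound in (ii) is read off \eqref{308} exactly as in the paper, and your equality analysis --- $\alpha=0$ forcing $\rho$ constant, pointwise equality in \eqref{249} and in the curvature bounds, and the Gauss relation \eqref{232} yielding $K$ constant in (ii) --- reproduces the paper's argument, including the converse via the constancy of the potential $|A|^2+\Ric_f(N,N)$.
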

\begin{proof}{(i)}
 Choosing the constant function $u=1$  to be the test function  in 
$(\ref{192})$ to estimate $\lambda_1$, and using the expression in $(\ref{139})$, we obtain that
\begin{align*}
 \lambda_1&\leq\frac{-\int_{\Sigma}1J_f1}{\int_{\Sigma}1^2}=-\frac{1}{
\area}\left[\int_{\Sigma}|\phi|^2+\frac{1}
{2}\int_{\Sigma}H^2+\int_{\Sigma } \Ric_ { f } (N , N)\right] \\
&=-\frac{1}{\area}\left[\int_{\Sigma}|\phi|^2+\frac{1}{2}\int_{\Sigma}(H_{ f}-\langle
N,\g f\rangle)^2+\int_{\Sigma}\Ric_{ f } (N , N)\right]\\
&\leq-\frac{1}{\area}\left[\int_{\Sigma}|\phi|^2+\frac{1}{2}\int_{
\Sigma}\left(\frac{H_{ f}^2}{1+m}-\frac{\langle 
N,\g f\rangle^2}{m}\right)+\int_{\Sigma}\Ric_{ f } (N , N)\right]\\
&=-\frac{H_{f}^2}{2(1+m)}-\frac{1}{\area}\left[\int_{\Sigma}
|\phi|^2+\int_{ \Sigma}\Ric_{ f}^{2m}(N,
N)\right]\\
&\leq-\frac{H_{f}^2}{2(1+m)}-2c-\frac{1}{\area}\int_{\Sigma}
|\phi|^2\\
&\leq -\frac{1}{2}\left(\frac{H_{ f}^2}{1+m}+4c\right).
\end{align*}

If $\lambda_1=-\frac{1}{2}\left(\frac{H_{ f}^2}{1+m}+4c\right)$, then all the inequalities  above becomes equalities and consequently
$\phi=0$ on $\Sigma$, $\Ric_{ f}^{2m}=2c$ and $d f(N)=\dfrac{m}{1+m}H_{ f}$. Therefore, $\Sigma$ is totally umbilical and the
normal of $\Sigma$ is a direction of minimal to curvature $\Ric_{ f}^{2m}$.
\vspace{3mm}

On the other hand, if $\Sigma$ is totally umbilical,  $\Ric_{ f}^{2m}=2c$ and
$d f(N)=\dfrac{m}{1+m}H_{ f}$, we have
\begin{eqnarray*}
 H&=&H_{ f}-d f(N)\\
&=&H_{ f}-\dfrac{m}{1+m}H_{ f}\\
&=&\dfrac{1}{1+m}H_{ f}
\end{eqnarray*}
and
$$ 2c=\Ric_{ f}^{2m}(N,N)=\Ric_{ f}(N,N)-\frac{1}{2m}(d f(N))^2.$$
So
\begin{eqnarray*}
\Ric_{ f}(N,N)&=&2c+\frac{1}{2m}(d f(N))^2\\
&=&2c+\frac{m}{2(1+m)^2}H_{ f}^2.
\end{eqnarray*}
Hence,
\begin{eqnarray*}
J_{ f}&=&\Delta_{ f}+\frac{H^2}{2}+2c+\frac{m}{2(1+m)^2}H_{ f}^2\\
&=&\Delta_{ f}+\frac{1}{2(1+m)^2}H_{ f}^2+2c+\frac{m}{2(1+m)^2}H_{ f}^2\\
&=&\Delta_{ f}+\frac{1}{2(1+m)}H_{ f}^2+2c,
\end{eqnarray*}
and thus we conclude that
$$\lambda_1=-\frac{1}{2}\left(\frac{H_{ f}}{1+m}+4c\right).$$
\vspace{5mm}

\noindent{ (ii)} As $\Ric_{ f}^{2m}\geq 2c,\,\overline{\Sec}\geq c$ and $\alpha\geq0$, we obtain by equation
 $(\ref{308})$ that
$$\lambda_{1}\leq-\frac{H_{ f}^2}{1+2m}-4c-8\pi(g-1)/\area.$$

If the equality occurs, then $\alpha=0,\,\overline{\Sec}_{\Sigma}=c$, $\Ric_{ f}^{2m}=2c$, 
this last means that the normal direction of $\Sigma$ is a
direction of minimal of the curvature $\Ric_{ f}^{m}$ of $M^3$. 
Moreover, we obtain of  the equation $(\ref{249})$ that
$$d f(N)=\frac{2m}{1+2m}H_{ f}.$$

In fact, $\alpha=0$ implies that $\rho$ is a constant and using of the equation $(\ref{182})$ we have that $|
A|^2$ is also a constant. Futhermore,
$$
 d f(N)=\frac{2m}{1+2m}H_{ f}
$$
implies that the mean curvature
$$
H=H_{ f}-\frac{2m}{1+2m}H_{ f}=\frac{1}{1+2m}H_{ f}
$$
is a constant and by equation $(\ref{232})$ we have that $K$ is a constant.\vspace{3mm} 

On the other hand, if $K$ is a constant, $\overline{\Sec}_{\Sigma}=c,\,\Ric_{ f}^{2m}=2c$ and $d f(N)=\dfrac{2m}{1+2m}H_{ f}$, we have that
$$
\Ric_{ f}(N,N)=2c+\frac{2m}{(1+2m)^2}H_{ f}^2,\qquad H=\frac{1}{1+2m}H_{ f},
$$
and so
\begin{eqnarray*}
J_{ f}&=&\Delta_{ f}+| A|^2+\Ric_{ f}(N.N)\\
&=&\Delta_{ f}+ H^2+2(c-K)+2c+\frac{2m}{(1+2m)^2}H_{ f}^2\\
&=&\Delta_{ f}+\frac{H_{ f}^2}{(1+2m)^2}+4c-2K+\frac{2m}{(1+2m)^2}H_{ f}^2\\
&=&\Delta_{ f}+\frac{H_{ f}^2}{1+2m}+4c-2K,
\end{eqnarray*}
and this implies that
$$
\lambda_{1}=-\frac{H_{ f}^2}{1+2m}-4c+2K.
$$
Now, using the Gauss-Bonnet theorem we conclude that
$$
\lambda_{1}=-\frac{H_{ f}^2}{1+2m}-4c-\frac{8\pi(g-1)}{\area}.
$$
\end{proof}\vspace{3mm}

Now considering that the ambient is a $3$-dimensional simply connected space form with sectional curvature $c$, $\mathbb{Q}^3_c$. If $c$ is positive, we assume that all surfaces are contained in a hemisphere. 
In this conditions we obtain the follows  result: 

\begin{corollary}
Let $\psi:\Sigma^2\rightarrow \mathbb{Q}_c^3$ be an isometric immersion of a compact and orientable surface 
with constant weighted mean curvature $H_{ f}$ and let $f$ be one half of the square of the extrinsic distance
function. Assume that $\psi(\Sigma)$ is contained in the geodesic ball center in the origin $0$ and radius $\sqrt{m}$. 
Then
\begin{enumerate}
\item[(i)] $\lambda_1\leq-\dfrac{1}{2}\left(\dfrac{H_{ f}^2}{1+m}+4c\right)$ 
\medskip
\item[(ii)]  $\lambda_1\leq-\dfrac{H_{ f}^2}{(1+2m)}-4c-\dfrac{8\pi(g-1)}{\area}. $
\end{enumerate}
The equalities occur if and only if $\psi(\Sigma)$ is the sphere center in the origin and radius $\sqrt{m}$.
\end{corollary}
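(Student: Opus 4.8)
The plan is to recognize this corollary as nothing more than the specialization of Theorem \ref{274} to the ambient $M^3=\mathbb{Q}_c^3$ equipped with the particular weight $f=\tfrac12 r^2$, where $r$ denotes the geodesic distance to the origin. Under this reading the whole task splits into two parts: checking, on the geodesic ball of radius $\sqrt m$, that the two curvature hypotheses $\overline{\Sec}\ge c$ and $\Ric_f^{2m}\ge 2c$ of Theorem \ref{274} are met, and then translating the abstract equality characterizations of that theorem into the concrete statement about the centered geodesic sphere.

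First I would dispose of the sectional curvature hypothesis: since $\mathbb{Q}_c^3$ has constant sectional curvature $c$ we have $\overline{\Sec}=c$ identically, so $\overline{\Sec}\ge c$ holds with equality everywhere and $\overline{\Sec}_\Sigma=c$ is automatic. The substance is the Bakry--\'Emery bound. Because $\Ric=2c\,g$ on $\mathbb{Q}_c^3$, one has $\Ric_f^{2m}-2c\,g=\Hess f-\frac{df\otimes df}{2m}$, so it suffices to prove this tensor is nonnegative on the ball. Here I would use the exact Hessian of the distance function in a space form, $\Hess r=\frac{\mathrm{cs}_c(r)}{\mathrm{sn}_c(r)}(g-dr\otimes dr)$, with $\mathrm{sn}_c,\mathrm{cs}_c$ the generalized sine and cosine. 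Writing $f=\tfrac12 r^2$ gives $\Hess f=dr\otimes dr+r\frac{\mathrm{cs}_c(r)}{\mathrm{sn}_c(r)}(g-dr\otimes dr)$ and $df\otimes df=r^2\,dr\otimes dr$, which diagonalizes $\Hess f-\frac{df\otimes df}{2m}$ in the radial/tangential frame, with radial eigenvalue $1-\frac{r^2}{2m}$ and tangential eigenvalue $r\frac{\mathrm{cs}_c(r)}{\mathrm{sn}_c(r)}$. On the ball both are nonnegative (the tangential one because, when $c>0$, the surfaces are assumed to lie in a hemisphere, so $\mathrm{cs}_c(r)\ge 0$), which yields $\Ric_f^{2m}\ge 2c$. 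With both hypotheses verified, items (i) and (ii) follow at once by quoting Theorem \ref{274}.

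The delicate part is the equality discussion, where I would feed the equality characterizations of Theorem \ref{274} into this concrete geometry. Equality in (i) forces $\Sigma$ to be totally umbilical, and a compact totally umbilical surface in a space form is a geodesic sphere; thus $\Sigma$ is a geodesic sphere and the remaining conditions $\Ric_f^{2m}(N,N)=2c$ and $df(N)=\frac{m}{1+m}H_f$ serve to locate its center and fix its radius. The first condition says the tensor $\Hess f-\frac{df\otimes df}{2m}$ vanishes in the direction $N$; decomposing $N$ into radial and tangential parts and using that both eigenvalues above are nonnegative, this can occur only when $N$ is purely radial and $1-\frac{r^2}{2m}$ vanishes, which forces the sphere to be centered at the origin and sits it exactly at the boundary sphere of the ball in the statement. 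The relation $df(N)=\frac{m}{1+m}H_f$ is then the consistency condition that selects precisely that sphere. Conversely, on the centered geodesic sphere one checks directly that $N$ is radial, that $H_f$, $H$, $\Ric_f(N,N)$ and $|A|$ are constant, and that all the equality conditions of Theorem \ref{274} (including $K$ constant and $\overline{\Sec}_\Sigma=c$) hold, so equality is attained.

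The main obstacle I anticipate is this equality analysis rather than the curvature estimate: one must rule out non-centered umbilical spheres and verify that the two pointwise identities can be satisfied simultaneously only in the degenerate radial configuration, which reduces to the observation that the radial and tangential eigenvalues of $\Hess f-\frac{df\otimes df}{2m}$ cannot both be annihilated by a common normal direction except at the critical radius with $N$ radial. The curvature verification itself, by contrast, is a routine though careful computation with the space-form Hessian of $\tfrac12 r^2$.
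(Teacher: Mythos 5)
Your reduction to Theorem \ref{274} and your verification of its hypotheses follow the paper's proof in substance: the paper likewise computes $\Ric_f^{2m}(N,N)$ from the space-form Hessian $\Hess r(N,N)=\cot_c(r)\bigl(1-(dr(N))^2\bigr)$ and concludes $\Ric_f^{2m}(N,N)\ge 2c$ from $r\le\sqrt m$ and $(dr(N))^2\le1$; your diagonalization of the full tensor $\Hess f-\frac{df\otimes df}{2m}$ into radial and tangential eigenvalues proves slightly more than the needed normal--normal bound, which is harmless. Items (i) and (ii) are therefore fine.

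The equality discussion, however, contains a genuine inconsistency. Taking $f=\frac12 r^2$ literally, as you do, your own radial eigenvalue $1-\frac{r^2}{2m}$ is at least $\frac12$ on the ball of radius $\sqrt m$ and vanishes only at $r=\sqrt{2m}$, outside the ball; combined with the positivity of the tangential eigenvalue $r\cot_c(r)$, this makes $\Ric_f^{2m}(N,N)>2c$ strict everywhere on the ball, so under your normalization equality can never occur, and your claim that the vanishing of the radial eigenvalue ``sits the sphere exactly at the boundary sphere of the ball in the statement'' contradicts your own formula. The paper's computation actually works with $\Hess r^2(N,N)-\frac{(dr^2(N))^2}{2m}$, i.e.\ effectively $f=r^2$, for which the radial eigenvalue is $2\bigl(1-\frac{r^2}{m}\bigr)$ and the critical radius is genuinely $\sqrt m$; equality then forces $(dr(N))^2=1$ and $r^2=m$, hence $r\equiv\sqrt m$ on $\Sigma$ and $\Sigma$ is the centered sphere (this is the paper's terse ``$dr(N)=1$ and $r^2=m$''). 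You must either adopt that normalization, flagging the factor-of-two discrepancy with the stated $f$, or replace $\sqrt m$ by $\sqrt{2m}$ throughout; as written the step fails. Separately, your converse --- that on the centered sphere ``all the equality conditions of Theorem \ref{274} hold'' --- cannot be right as stated: equality in (i) requires $df(N)=\frac{m}{1+m}H_f$ while equality in (ii) requires $df(N)=\frac{2m}{1+2m}H_f$, and these are compatible only when $H_f=0$, which fails on the centered sphere. A direct check with $c=0$ and $f=r^2$ shows the sphere of radius $\sqrt m$ attains equality in (i) (there $df(N)=2\sqrt m=mH$) but not in (ii) (which would need $df(N)=2mH$). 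To be fair, the paper itself only proves the ``only if'' direction and never verifies the converse, so this defect is partly inherited from the statement; but your proposal explicitly asserts a verification that does not go through for (ii).
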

\begin{proof}
 We know that
$$ \Ric_{ f}^{2m}(N,N)=2c+ \Hess r^2(N,N)-\frac{(dr^2(N))^2}{2m}$$
and 
\begin{eqnarray*}
 \Hess r^2(N,N)&=&\langle\nabla_N\g r^2,N\rangle\\
&=&2(dr(N))^2+2r\Hess r(N,N).
\end{eqnarray*}
Now, using the expression of the Hessian of the distance function in a space form, we have that
$$\Hess r(N,N)= \cot_c(r)[1-(dr(N))^2],$$
where
$$
cot_c(s)=\left\{ \begin{array}{ccl}
\sqrt{-c}\, coth(\sqrt{-c}s) & \mbox{if} & c<0, \\
\frac{1}{s} & \mbox{if} & c=0,\\
\sqrt{c}\, cot(\sqrt{c}s) & \mbox{if} & c>0.
\end{array}\right.
$$
So,
$$\Hess r^2(N,N)=2(dr(N)^2)+2r\cot_c(r)[1-(dr(N))^2].$$

\medskip
Now, using that the surface is contained in the ball center in the
origin and radius $\sqrt{m}$ and $(dr(N))^2\leq1$, we obtain that
\begin{eqnarray*}
 \Ric_{ f}^{2m}(N,N)&=&2c+2(dr(N))^2+2r\,\cot_c(r)(1-(dr(N))^2)-\frac{4r^2(dr(N))^2}{2m}\\ 
&\geq&2c.
\end{eqnarray*}
Therefore by  {Theorem \ref{274}}, we conclude the inequalities enunciates. 
To finalize, if the equalities occurs in the 
 inequalities, then $dr(N)=1$ and $r^2=m.$

\end{proof}

\begin{corollary}\label{equation 5}
 Let $(M^3,g, f)$ be a weighted manifold with $\Ric_{ f}^{2m}\!\geq 2c$ and $\overline{\Sec}\geq c$.  
\begin{enumerate}
 \item[{(i)}] There is no compact stable surface with 
$$\frac{H_{ f}^2}{1+2m}+4c>0.$$ 
 \item[{(ii)}] If $\Sigma^2$ is a compact and stable surface and
$\dfrac{H_{ f}^2}{1+2m}+4c=0$, then $\Sigma^2$
is topologically  a sphere or a torus.
 \item[{(iii)}] If $\Sigma^2$ is a compact and stable surface such that 
$\dfrac{H_{ f}^2}{1+2m}+4c<0$, then
$${\area}\left|\dfrac{H_{ f}^2}{1+2m}+4c\right|\geq 8\pi(g-1).$$ 
\end{enumerate}
\end{corollary}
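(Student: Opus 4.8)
The plan is to derive all three statements directly from part (ii) of Theorem~\ref{274}, which asserts that
\begin{equation*}
\lambda_1\leq-\frac{H_{f}^2}{1+2m}-4c-\frac{8\pi(g-1)}{\area}.
\end{equation*}
Since stability means $\lambda_1\geq 0$ by definition, the starting observation is that stability forces the right-hand side to be nonnegative, i.e.
\begin{equation*}
\frac{H_{f}^2}{1+2m}+4c+\frac{8\pi(g-1)}{\area}\leq 0.
\end{equation*}
Everything then reduces to analyzing this single scalar inequality together with the topological constraint that the genus $g$ is a nonnegative integer, so that the sign of $8\pi(g-1)$ is controlled: it is negative precisely when $g=0$, zero when $g=1$, and positive when $g\geq 2$. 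Because $\area>0$, the term $8\pi(g-1)/\area$ carries the same sign as $g-1$.

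For part (i), I would argue by contradiction. Suppose $\Sigma$ is stable and $\tfrac{H_f^2}{1+2m}+4c>0$. The stability inequality gives $\tfrac{8\pi(g-1)}{\area}\leq-\bigl(\tfrac{H_f^2}{1+2m}+4c\bigr)<0$, which forces $g-1<0$, hence $g=0$. But then $8\pi(g-1)/\area=-8\pi/\area$, and substituting back one needs $\tfrac{H_f^2}{1+2m}+4c\leq 8\pi/\area$; this is not yet a contradiction, so I anticipate this is the delicate point. The resolution should use that for a topological sphere ($g=0$) in an ambient with $\overline{\Sec}\geq c$ one can refine the estimate, or more simply re-examine the equality discussion in Theorem~\ref{274}(ii): a genuine contradiction with $\lambda_1\geq 0$ must emerge from the strict positivity of the curvature quantity combined with the fact that a stable surface would saturate too many of the intermediate inequalities in the proof of the theorem. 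I expect establishing the sphere case of (i) rigorously to be the main obstacle.

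For part (ii), under $\tfrac{H_f^2}{1+2m}+4c=0$ the stability inequality collapses to $8\pi(g-1)/\area\leq 0$, hence $g-1\leq 0$, which gives $g\in\{0,1\}$, i.e. $\Sigma$ is topologically a sphere or a torus, as claimed. For part (iii), when $\tfrac{H_f^2}{1+2m}+4c<0$ I would write the stability inequality as
\begin{equation*}
\frac{8\pi(g-1)}{\area}\leq-\left(\frac{H_{f}^2}{1+2m}+4c\right)=\left|\frac{H_{f}^2}{1+2m}+4c\right|,
\end{equation*}
and multiply through by $\area>0$ to obtain $8\pi(g-1)\leq\area\bigl|\tfrac{H_f^2}{1+2m}+4c\bigr|$, which is exactly the asserted inequality. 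Thus parts (ii) and (iii) are immediate once the defining stability condition is combined with Theorem~\ref{274}(ii); the only substantive work lies in part (i), and the plan is to handle it by the contradiction argument above, closing off the residual $g=0$ case through the equality analysis already available in the excerpt.
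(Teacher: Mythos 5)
Your parts (ii) and (iii) are correct and coincide with the paper's own proof: stability ($\lambda_1\geq0$ by definition) combined with Theorem \ref{274}(ii) yields $g\leq1$ when $\tfrac{H_f^2}{1+2m}+4c=0$, and the area inequality when it is negative. The genuine problem is part (i), and you diagnosed it yourself: deriving (i) from Theorem \ref{274}(ii) stalls at genus zero, because when $g=0$ the term $-8\pi(g-1)/|\Sigma|$ is \emph{positive}, so the bound $\lambda_1\leq-\bigl(\tfrac{H_f^2}{1+2m}+4c\bigr)+8\pi/|\Sigma|$ is perfectly compatible with $\lambda_1\geq0$ whenever $\tfrac{H_f^2}{1+2m}+4c\leq 8\pi/|\Sigma|$. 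Your proposed escape --- forcing a contradiction through the equality analysis of Theorem \ref{274}(ii) --- does not work as stated: a hypothetical stable sphere need not saturate that estimate at all (the inequality can be strict), so the rigidity discussion is never triggered, and nothing in the paper upgrades the genus-zero case of that bound. As written, part (i) of your proof is incomplete.

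The missing idea is that item (i) should be derived from part (i) of Theorem \ref{274}, not part (ii) --- which is exactly what the paper does, in one line. Theorem \ref{274}(i) gives the genus-free estimate $\lambda_1\leq-\tfrac{1}{2}\bigl(\tfrac{H_f^2}{1+m}+4c\bigr)$, and since $m>0$ one has $\tfrac{H_f^2}{1+m}\geq\tfrac{H_f^2}{1+2m}$, so the hypothesis $\tfrac{H_f^2}{1+2m}+4c>0$ forces $\tfrac{H_f^2}{1+m}+4c>0$ and hence $\lambda_1<0$, contradicting stability with no case analysis on the genus whatsoever. Replacing your contradiction argument in (i) by this deduction completes the proof; the rest of your write-up can stand as is.
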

\begin{proof}
 By definition, a surface is stable if and only if $\lambda_1\geq0 $. Thus the item 
{\ (i)} follows from the {Theorem \ref{274}} (i).  For the item {(ii)}, we used that $\frac{H_{ f}^2}{1+2m}+4c=0$  and Theorem \ref{274} (ii) to obtain that
$$\lambda_1\leq -\frac{8\pi(g-1)}{\area}.$$
Now, using the stability we conclude that $g=0$ or $1$ and thus $\Sigma$ is topologically a sphere or a torus.

To finalize we using the definition of stability and the Theorem \ref{274} (ii). So,
$$0\leq\lambda_1\leq -\dfrac{H_{ f}^2}{1+2m}-4c-\frac{8\pi(g-1)}{\area},$$
and thus
$${\area}\left|\dfrac{H_{ f}^2}{1+m}+4c\right|\geq 8\pi(g-1).$$
\end{proof}


Another consequence of the {Theorem \ref{274}} is improves the proposition $3.2$ in \cite{ir} for the case in that $\Sigma$ is not necessarily $f$-minimal.
\begin{corollary}\label{equation 6}
 Under the same assumptions of the {Theorem \ref{274}}. 
 \begin{enumerate}
  \item[{(i)}] If $c>0$, then $\Sigma$ cannot be stable;
  \item[{(ii)}] If $c=0$, but $H_f\neq0$,  then $\Sigma$ cannot be stable;
  \item[{(iii)}] If $c=H_f=0$, and the genus $g\geq2$, then $\Sigma$ cannot be stable;
  \item[{(iv)}] If $c=0$ and $\Sigma$ is stable, then $H_f=0$.
 \end{enumerate}
\end{corollary}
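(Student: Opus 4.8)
The plan is to read off all four statements directly from the two eigenvalue bounds of Theorem \ref{274}, using nothing beyond the definition of stability, namely that $\Sigma$ is stable precisely when $\lambda_1\geq0$. The uniform strategy is to show that under each hypothesis the relevant upper bound for $\lambda_1$ is strictly negative, which immediately precludes stability.

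For items (i) and (ii) I would invoke part (i) of Theorem \ref{274}, which asserts $\lambda_1\leq-\frac{1}{2}\left(\frac{H_{f}^2}{1+m}+4c\right)$. In case (i), since $c>0$, $m>0$ and $H_{f}^2\geq0$, the quantity in parentheses is strictly positive, so $\lambda_1<0$ and $\Sigma$ cannot be stable. In case (ii), setting $c=0$ collapses the bound to $\lambda_1\leq-\frac{H_{f}^2}{2(1+m)}$, which is strictly negative as soon as $H_{f}\neq0$, again forcing instability.

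For item (iii) I would instead use part (ii) of Theorem \ref{274}. Substituting $c=H_{f}=0$ reduces the estimate to $\lambda_1\leq-\frac{8\pi(g-1)}{\area}$; when the genus satisfies $g\geq2$ the right-hand side is strictly negative, so $\Sigma$ is unstable. Item (iv) is then the contrapositive of item (ii): if $c=0$ and $\Sigma$ were stable with $H_{f}\neq0$, then (ii) would yield $\lambda_1<0$, a contradiction, so necessarily $H_{f}=0$.

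Since every step is a direct substitution into an inequality already established in Theorem \ref{274}, there is no genuine analytic obstacle; the only point requiring care is the bookkeeping of signs together with the observation that $m>0$ guarantees the denominators $1+m$ and $1+2m$ are positive, so that the sign of each bound is governed entirely by $c$, $H_{f}$ and $g$.
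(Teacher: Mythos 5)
Your proposal is correct and is essentially the argument the paper intends: the paper simply observes that all four items are rewrites of Corollary \ref{equation 5}, which is itself obtained by the same direct substitutions into the two bounds of Theorem \ref{274} (part (i) for the sign of $\frac{H_f^2}{1+m}+4c$, part (ii) for the genus term) combined with the definition of stability as $\lambda_1\geq 0$. Your version just bypasses the intermediate corollary and performs the sign bookkeeping directly, which matches the paper's reasoning step for step.
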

Note that the item {(i)} and {(ii)} of the { Corollary \ref{equation 6}}  are only a rewrite of the item 
{ (i)} of {Corollary \ref{equation 5}} and the item {(iii)} and {(iv)} are rewrite of the item {(ii)}.\vspace{3mm}

Now, we recall the generalized  sectional
curvature 
$$
\overline{\Sec}_f^{2m}(X,Y)=\overline{\Sec}(X,Y)+\frac{1}{2}
\left(\Hess f(X,X)-\frac{(df(X))^2}{{2m}}\right),
$$
where $X$, $Y$ are unit and orthogonal vectors fields on $M$.

\medskip
Moreover,
$$
\Ric_f^{ 2m}(X,X)=\sum_{i=1}^{2}\overline{\Sec}_f^{2m}(X,Y_i).
$$

Since 
$$\Ric_f^{2m}(N,N)+2\overline{\Sec}_{\Sigma}\geq \Ric_f^{2m}(N,N)+2\overline{\Sec}_f|_\Sigma-\Hess f(X,X),$$
where $X$ is a vector field on $\Sigma$.
So, if $\Hess f\leq \sigma g$, we rewrite the expression $(\ref{308})$ by

\begin{eqnarray} \label{569}\\
\lambda_1&\leq&-\frac{H_{f}^2}{1+2m}-\frac{1}{\area}\left\{
\alpha+8\pi(g-1)+\int_{\Sigma}\left(\Ric_{f}^{2m}(N, N)+2\overline\Sec_f^{2m}-\sigma\right)\right\}.\nonumber
\end{eqnarray}\\
\medskip
As a consequence, we have the following result:

\begin{theorem}\label{equation 4}
 Let $(M^3,g, f)$ be a weighted manifold with the weighted sectional  curvature satisfying
  $\overline{\Sec}_f^{2m}\geq c$, for some $c\in\mathbb R$, and $\Hess f\leq  \sigma\cdot g$ for some real function $\sigma$ on $M$.
Consider $\psi:\Sigma^2\rightarrow M^3$ an isometric immersion of a compact surface 
with constant weighted mean curvature $H_{ f}$. 
Then, 
\begin{enumerate}
\item [{(i)}] $\lambda_1\leq-\frac{1}{2}\left(\frac{H_{ f}^2}{1+m}+4c\right)$, 
with equality if and only if $\Sigma$ is totally
umbilical in $M^3$, $\Ric_{ f}^{2m}=2c$ and $d f(N)=\dfrac{m}{1+m}H_{ f}$ on $\Sigma$.

\item [{(ii)}] $\lambda_1\leq-\dfrac{H_{f}^2}{(1+2m)}-\left(4c-\dfrac{\int_\Sigma \sigma\, dv_f}{\area}\right)-\dfrac{8\pi(g-1)}{\area},$  
where $g$ denotes the genus of $\Sigma$. 

If  the equality occurs, then $\overline{\Sec}_f^{2m}=c$, 
$\Ric_{f}^{2m}=2c$, $d f(N)=\dfrac{2m}{1+2m}H_ f$, $H=\frac{1}{1+2m}H_f$, and $| A|$ is a constant on $\Sigma$. 
Moreover, 
$M^3$ has constant sectional curvature $k$ and $e^{-f}$ 
is the restriction of a coordinate function from the appropriate 
canonical embedding of $\mathbb{Q}^3_k$ in $\mathbb{E}^4$, 
where $\mathbb{E}^4$ is $\mathbb{R}^4$ or $\mathbb{L}^4.$
\end{enumerate}
\end{theorem}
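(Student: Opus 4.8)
The plan is to handle the two items by specializing the general estimates already assembled in the Preliminaries, and then to extract the rigidity statements from the equality cases. For item (i), I observe that the only ambient input used in the proof of Theorem \ref{274}(i) is $\Ric_f^{2m}(N,N)\geq 2c$, and since $\Ric_f^{2m}(N,N)=\sum_{i=1}^{2}\overline{\Sec}_f^{2m}(N,E_i)$ for an orthonormal frame $\{E_1,E_2\}$ of $T\Sigma$, the hypothesis $\overline{\Sec}_f^{2m}\geq c$ immediately gives $\Ric_f^{2m}\geq 2c$. Hence I would reproduce verbatim the chain in Theorem \ref{274}(i): take $u=1$ in the Rayleigh quotient $(\ref{192})$, expand $J_f$ via $(\ref{139})$, substitute $H=H_f-df(N)$, apply $(\ref{249})$ to $(H_f-df(N))^2$, and bound $\Ric_f^{2m}(N,N)\geq 2c$ together with $|\phi|^2\geq 0$; the equality discussion is unchanged and yields $\Sigma$ totally umbilical, $\Ric_f^{2m}=2c$ and $df(N)=\frac{m}{1+m}H_f$. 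For the inequality in item (ii) I would start directly from $(\ref{569})$, which already incorporates $\Hess f\leq\sigma g$, and bound the integrand by $\overline{\Sec}_f^{2m}\geq c$ in its two occurrences, namely $\Ric_f^{2m}(N,N)\geq 2c$ and $\overline{\Sec}_f^{2m}|_{\Sigma}\geq c$, together with $\alpha\geq 0$, to obtain
$$\int_\Sigma\left(\Ric_f^{2m}(N,N)+2\overline{\Sec}_f^{2m}-\sigma\right)\;\geq\; 4c\,\area-\int_\Sigma\sigma\,dv_f .$$
Substituting this into $(\ref{569})$ and recalling that all integrals are taken against $dv_f$ produces exactly the claimed bound.

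The substantive content is the rigidity. Assuming equality in (ii), every inequality used becomes an equality, and since each integrand was nonnegative this forces the corresponding pointwise equalities along $\Sigma$. From $\alpha=0$ I would conclude, as in Theorem \ref{274}(ii), that the first eigenfunction $\rho$ is constant, so that $(\ref{182})$ makes $|A|^2$ constant; equality in $(\ref{249})$ gives $df(N)=\frac{2m}{1+2m}H_f$, whence $H=\frac{1}{1+2m}H_f$ is constant, and $(\ref{232})$ then forces $K$ to be constant. Simultaneously one reads off $\Ric_f^{2m}(N,N)=2c$, $\overline{\Sec}_f^{2m}|_{\Sigma}=c$, and saturation of the Hessian bound $\Hess f\leq\sigma g$ in the tangential directions along $\Sigma$.

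To reach the ambient conclusion I would argue that these pointwise equalities force $\overline{\Sec}_f^{2m}\equiv c$. First, for any unit $Y\perp N$ extend to an orthonormal frame $\{Y,Z\}$ of $N^{\perp}$; then $\overline{\Sec}_f^{2m}(N,Y)+\overline{\Sec}_f^{2m}(N,Z)=\Ric_f^{2m}(N,N)=2c$ with both summands $\geq c$, so $\overline{\Sec}_f^{2m}(N,Y)=c$ for every $Y\perp N$, i.e.\ all two-planes through $N$ realize the curvature $c$. Combining this with the tangential equality $\overline{\Sec}_f^{2m}|_{\Sigma}=c$ and the saturated Hessian identity, the aim is to pin down the symmetric tensor $\Hess f-\frac{df\otimes df}{2m}$ as a function multiple of $g$, exactly as in the proof of Lemma \ref{261}, and then to invoke that lemma to conclude that $M^3$ has constant sectional curvature $k$ and that the appropriate exponential of $f$ is the restriction of a coordinate function from the canonical embedding of $\mathbb{Q}^3_k$ into $\mathbb{E}^4$.

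The step I expect to be the main obstacle is precisely this last one: the equality analysis is intrinsically anchored on $\Sigma$, whereas Lemma \ref{261} is a statement about the ambient metric and density and therefore genuinely requires $\overline{\Sec}_f^{2m}=c$ on an open set, not merely along the surface. The delicate issue is twofold. Pointwise at $p\in\Sigma$ the coordinate data (the planes through $N$ and the tangent plane $T\Sigma$) leave the non-coordinate two-planes uncontrolled, and since $\overline{\Sec}_f^{2m}$ is not symmetric in its two arguments one must verify carefully that the saturated Hessian condition fills this gap and yields $\Hess f-\frac{df\otimes df}{2m}=w\cdot g$ with $w$ locally constant, so that Schur's lemma applies. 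Propagating this identity off $\Sigma$ into a neighborhood in $M$ is the remaining point that I would treat with most care.
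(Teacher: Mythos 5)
Your proposal tracks the paper's proof essentially step for step. For (i) the paper simply cites Theorem \ref{274}(i), and your reduction via the trace identity $\Ric_f^{2m}(X,X)=\sum_{i=1}^{2}\overline{\Sec}_f^{2m}(X,Y_i)$ is precisely the implicit justification; for the inequality in (ii) the paper likewise starts from (\ref{569}) and uses $\Ric_f^{2m}(N,N)\geq 2c$, $\overline{\Sec}_f^{2m}|_\Sigma\geq c$ and $\alpha\geq 0$; and your equality analysis along $\Sigma$ ($\alpha=0$ forces $\rho$ constant, hence $|A|^2$ constant by (\ref{182}); equality in (\ref{249}) gives $df(N)=\frac{2m}{1+2m}H_f$ and then $H=\frac{1}{1+2m}H_f$) is the paper's argument verbatim.

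The obstacle you single out at the end is genuine, and you should know that the paper does not overcome it either: its proof asserts, without argument, that equality yields $\overline{\Sec}_f^{2m}=c$ and then invokes Lemma \ref{261}, whose hypothesis is constancy of the weighted sectional curvature on the ambient manifold, not merely along the surface. Equality in an integrated inequality over $\Sigma$ produces pointwise identities only along $\Sigma$, and only for the planes that actually enter the estimate: the tangent plane (in symmetrized form) and, via your correct refinement splitting $\Ric_f^{2m}(N,N)=2c$ into two summands each $\geq c$, the planes containing $N$. It additionally forces $\Hess f(X,X)=\sigma$ and, since the discarded term $(df(X))^2/4m$ must vanish, $df(X)=0$ for tangential $X$, so $\nabla f$ is normal along $\Sigma$ --- a fact neither you nor the paper records. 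None of this controls two-planes at points of $M$ off $\Sigma$, so the concluding ambient statements (constant sectional curvature of $M^3$, and $e^{-f}$ a coordinate function of the canonical embedding of $\mathbb{Q}^3_k$ in $\mathbb{E}^4$) do not follow as written; the one-way phrasing of item (ii) does not cure this, because the asserted consequences are global statements about $M$. In short, your proposal is faithful to the paper, your pointwise refinements are correct additions, and the step you flagged as the main obstacle is a real lacuna in the paper's own proof, which would require an extra input (for instance, the curvature equalities holding on an open neighborhood of $\Sigma$, or some propagation argument) before Lemma \ref{261} can legitimately be applied.
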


\begin{proof} The item (i) is a consequence of Theorem \ref{274} (i). To second item, we using the expression in (\ref{569}) and our hypotheses.

Now, if equality holds, then $\alpha=0$, $\Ric_{ f}^{2m}=2c$ and $\overline{\Sec}_f^{2m}=c$. 
 By equality in the inequality $(\ref{249})$, we obtain 
$$d f(N)=\frac{2m}{1+2m}H_{ f},$$
and so
$$ H=H_{ f}-\frac{2m}{1+2m}H_{ f}=\frac{1}{1+2m}H_{ f}.$$
Moreover, $\alpha=0$ imply that $\rho$ is constant and of  the equation $(\ref{182})$ we have that $|
A|^2$ is also a constant.  

To finish, we using the Lemma \ref{261}  to conclude that $M^3$ has constant sectional curvature
and $e^{-f}$ has the property enunciate in case of the equality.
\end{proof}
\section*{Acknowledgement}

We would like to thank the M. Cavalcante for useful suggestions and comments.

%


\end{document}